\documentclass[11pt,twoside]{amsart}
\usepackage{amsmath,amssymb,amscd,amsthm,verbatim,alltt,amsfonts,array,curves,mathabx}
\usepackage{mathrsfs}
\usepackage[english]{babel}
\usepackage{latexsym}
\usepackage{amssymb}
\usepackage{euscript}
\usepackage{graphicx}

\textwidth=16.00cm
\textheight=22.00cm
\topmargin=0.00cm
\oddsidemargin=0.00cm 
\evensidemargin=0.00cm
\headheight=0cm
\headsep=1cm
\headsep=0.5cm 
\numberwithin{equation}{section}
\hyphenation{semi-stable}
\setlength{\parskip}{3pt}

\newtheorem{theorem}{Theorem}[section]
\newtheorem{lemma}[theorem]{Lemma}

\theoremstyle{definition}
\newtheorem{definition}[theorem]{Definition} 
 
\newtheorem{remark}[theorem]{Remark}
\newtheorem{example}[theorem]{Example}

\begin{document}


\title[Relative Generalized Minimum Distance Functions ]{Relative Generalized Minimum Distance Functions } 

\thanks{The first and fourth authors were supported by COFAA-IPN and SNI,
Mexico. The third author was supported by SNI, Mexico. }



\author[M. Gonz\'alez-Sarabia]{Manuel Gonz\'alez-Sarabia} 
\address{M.G. Sarabia. Instituto Polit\'ecnico Nacional, 
UPIITA, Av. IPN No. 2580,
Col. La Laguna Ticom\'an,
Gustavo A. Madero C.P. 07340,
 Ciudad de M\'exico. 
Departamento de Ciencias B\'asicas}
\email{mgonzalezsa@ipn.mx}

\author[M. E.Uribe--Paczka]{M. Eduardo Uribe--Paczka}
\address{
Instituto Polit\'ecnico Nacional\\
Escuela Superior de F\'{\i}sica y Matem\'aticas \\
Departamento de Matem\'aticas \\
07300, Ciudad de M\'exico,  M\'exico \\
}
\email{muribep1700@alumno.ipn.mx}

\author[E. Sarmiento]{Eliseo Sarmiento}
\address{
Instituto Polit\'ecnico Nacional\\
Escuela Superior de F\'{\i}sica y Matem\'aticas \\
Departamento de Matem\'aticas \\
07300, Ciudad de M\'exico,  M\'exico \\
}
\email{esarmiento@ipn.mx}

\author[C. Renter\'\i a]{Carlos Renter\'\i a}
\address{
Instituto Polit\'ecnico Nacional\\
Escuela Superior de F\'{\i}sica y Matem\'aticas \\
Departamento de Matem\'aticas \\
07300, Ciudad de M\'exico,  M\'exico \\
}
\email{renteri@esfm.ipn.mx}

\keywords{Relative generalized minimum distance function, Relative generalized footprint function, Relative generalized Hamming weights.}
\subjclass[2010]{Primary 13P25; Secondary 14G50, 94B27, 11T71.} 
\begin{abstract}
In this paper we introduce the relative generalized minimum distance function (RGMDF for short) and it allows us to give an algebraic approach to the relative generalized Hamming weights of the projective Reed--Muller--type codes. Also we introduce the relative generalized footprint function and it gives a tight lower bound for the RGMDF which is much easier to compute.	
\end{abstract}

\maketitle 

This work is a non--trivial generalization of \cite{hilbert-min-dis}, where the case of an algebraic approach to the minimum distance of a Reed--Muller--type code is treated, and \cite{rth-footprint}, where a similar approach is given for the case of the generalized Hamming weights of these codes. The main goal here is the study of the relative generalized Hamming weights (Definition \ref{RGHW1}) of the Reed--Muller--type codes from an algebraic point of view. In order to do this, we introduce the relative generalized minimum distance function (Definition \ref{RGMDF}) and the relative footprint function (Definition \ref{RGFF}).

The Reed--Muller--type codes and their parameters have been studied extensively. If $\mathbb{X}$ is a subset of a projective space $\mathbb{P}^{s-1}$ over a finite field $K=\mathbb{F}_q$, and $C_{\mathbb{X}}(d)$ is the corresponding Reed--Muller--type code (Definition \ref{RMTC}), several cases have been described \cite{carvalho-lopez-lopez}, \cite{duursma},\cite {GRT},\cite{sarabia1},\cite{sarabia2},\cite{sarabia3},\cite{sarabia5},\cite{sarabia4}, \-\cite{GHW2014},\cite{camps-sarabia-sarmiento-vila},\cite{lachaud}, \cite{hiram},\ \cite{neves}, \
\cite{vaz}, \cite{Veronese},\cite{IdealApproach},\cite{algcodes},\cite{ci-codes},\cite{sorensen}:
\begin{itemize}
\item Projective Reed--Muller codes: $\mathbb{X}=\mathbb{P}^{s-1}$.
\item Generalized Reed--Muller codes: $\mathbb{X}=\varphi (\mathbb{A}^{s-1})$, where $\mathbb{A}^{s-1}$ is an affine space and $\varphi: \mathbb{A}^{s-1} \rightarrow \mathbb{P}^{s-1}$, $\varphi(a_1,\ldots,a_{s-1})=[1:a_1: \cdots :a_{s-1}]$.
\item Reed--Muller--type codes arising from the Segre variety or the Veronese variety: $\mathbb{X}$ is the set of $K$--rational points of the variety.
\item Reed--Muller--type codes arising from a complete intersection: $\mathbb{X}$ is such that its defining ideal is a set--theoretic complete intersection.
\item Codes parameterized by a set of monomials: $\mathbb{X}$ is the toric set associated to these monomials.
\item Codes parameterized by the edges of a graph: $\mathbb{X}$ is the toric set associated to the edges of a simple graph.
\item Affine cartesian codes: $\mathbb{X}$ is the image of a cartesian product of subsets of $K$ under the map $K^{s-1} \rightarrow \mathbb{P}^{s-1}$, $x \rightarrow [x:1]$.
\item Projective cartesian codes: $\mathbb{X}$ is the image of the cartesian product $A_1 \times \cdots \times A_s \setminus \{\vec{0}\}$ under the map $K^s \setminus \{\vec{0}\} \rightarrow \mathbb{P}^{s-1}$, $x \rightarrow [x]$,
\end{itemize}
and others. 

On the other hand, the relative generalized Hamming weights (RGHW for short) of a linear code were introduced in \cite{Luo}. They are a natural generalization of the generalized Hamming weights introduced by Wei in \cite{wei}. The study of the RGHW is motivated because of their usefulness to protect messages from an adversary in the wire--tap channel of type II with illegitimate parties. Some properties of the RGHW of $q$--ary codes are described in \cite{Liu} and they are computed in the cases of almost all $4$--dimensional linear codes and their subcodes. Furthermore, some equivalences, inequalities and bounds are given in \cite{Zhuang}. The behavior of the RGHW of one point algebraic geometric codes is analyzed in \cite{Geil}. In the case of Hermitian codes, the RGHW are often much larger than the corres\-pon\-ding generalized Hamming weights. Also some bounds for the RGHW of some codes parameterized by a set of monomials of the same degree are given in \cite{sarabia6}. Particularly, the case of the codes parameterized by the edges of a connected bipartite graph is developed. Recently, in \cite{Geil2}, the authors use the footprint bound from  Gr\"obner basis theory to establish the true values of all corresponding RGHW for $q$--ary Reed--Muller codes in two variables. For the case of more variables they describe a simple and low complexity algorithm to determine the parameters.

The contents of this paper are as follows. In section \ref{prel} we introduce some concepts that will be needed throughout the paper. Particularly the definition of the relative generalized minimum distance function, which coincides with the relative generalized Hamming weights of certain Reed--Muller--type codes, and the definition of the relative generalized footprint function, which is a lower bound, easier to compute, for these weights.

In section \ref{MainResults} we show our main results. Theorems \ref{equivalence1} and \ref{Vasconcelos} give two algebraic equivalences for the relative generalized Hamming weights of some Reed--Muller--type codes: the relative generalized minimum distance function and the relative Vasconcelos function.
Also we prove that in the case of the relative generalized minimum distance function it is not necessary to analyze all the homogeneous polynomials of degree $d$. It is enough to study the standard polynomials (Theorem \ref{equivalence2}).
Finally, in Theorem \ref{LowerBound}, we show a lower bound for the relative generalized Hamming weights of some Reed--Muller--type codes which is easier to compute than the relative generalized minimum distance function

For additional information about Gr\"obner bases and Commutative Algebra, we refer to \cite{CLO,Ene-Herzog,monalg-rev}. For basic Coding Theory, we refer to \cite{MacWilliams-Sloane}.

\section{Preliminaries} \label{prel}

Let $S=K[t_1,\ldots,t_s]=\oplus_{d=0}^{\infty} S_d$ be a polynomial ring over a field $K$ with the standard grading. Let $I \neq (0)$ be a graded ideal of $S$ of Krull dimension $\beta$, and let $I_d=I \cap S_d$. The {\it{Hilbert function}} of $S/I$ is given by
\begin{align*}
& \hspace{0.7cm} H_I: \mathbb{N}_0 \rightarrow \mathbb{N}_0, \\
& H_I(d)=\dim_K(S_d/I_d),
\end{align*}
where $\mathbb{N}_0$ stands for the non--negative integers. It is known that there is a unique polynomial $h_I(x)=a_{\beta-1}x^{\beta-1}+\cdots+a_1x+a_0 \in \mathbb{Q}[x]$, $a_{\beta-1} \neq 0$, such that $h_I(d)=H_I(d)$ for $d \gg 0$. The {\it{degree or multiplicity}} of $S/I$ is the positive integer given by
$$
\deg(S/I)=\left\{
\begin{array}{lll}
(\beta-1)! \cdot a_{\beta-1} & {\mbox{if}} & \beta \geq 1, \\
& & \\
\dim_K(S/I) & {\mbox{if}} & \beta=0.
\end{array} \right.
$$

Particularly, in this work we consider mainly the case of finite fields, and if $\mathbb{X}$ is a subset of a projective space, we use as the graded ideal $I$ the vanishing ideal $I_{\mathbb{X}}$. In this situation, $\beta=1$ and the Hilbert polynomial is $|\mathbb{X}|$. Therefore $\deg(S/I)=|\mathbb{X}|$. However, the following definitions are valid for any field $K$ and any graded ideal $I \neq (0)$ of $S_d$.

Let $k_1 \in \ldbrack 0,k \rdbrack$, where $k=H_I(d)$ and $\ldbrack a,b \rdbrack:=\{x \in \mathbb{Z}: a \leq x \leq b\}$. If $k_1=0$ we define $\mathcal{W}_{d,k_1}=\{\emptyset\}$. If $k_1 \geq 1$ then let $\mathcal{W}_{d,k_1}$ be the set of all subsets $\{g_1,\ldots,g_{k_1}\}$ of $S_d$ such that $g_1+I,\ldots,g_{k_1}+I$ are linearly independent over $K$. Given $d \in \mathbb{N}$, $k_1 \in \ldbrack 0,k \rdbrack$, $r \in \ldbrack 1,k-k_1 \rdbrack$, $G \in \mathcal{W}_{d,k_1}$, we set
$$
U_{d,r,k_1,G} :=  \{\{f_1,\ldots,f_r\} \subseteq S_d :  \,  \{f_1,\ldots,f_r\} \cup G
\,  \, {\mbox{is $K$--linearly independent modulo $I$}}
\}.
$$
Also, we define
$$
\mathcal{F}_{d,r,k_1,G} :=  \{\{f_1,\ldots,f_r\} \in U_{d,r,k_1,G} : \,  (I:(f_1,\ldots,f_r)) \neq I
\},
$$
where $(I:(f_1,\ldots,f_r))=\{f \in S : f f_i \in I \,\, {\mbox{for all}} \,\, i\}$ is an ideal quotient.
We observe that if $k_1=0$ then $G=\emptyset$ and $\mathcal{F}_{d,r,k_1,G}$ is the set $\mathcal{F}_{d,r}$ introduced in \cite{rth-footprint}.

\begin{definition} \label{RGMDF}
{\it{The relative generalized minimum distance function}} (RGMDF for short) of I is the function $\delta_I: \mathbb{N} \times \ldbrack 1,k-k_1 \rdbrack \times \ldbrack 0,k \rdbrack \times \mathcal{W}_{d,k_1} \rightarrow \mathbb{Z}$ given by
$$
\delta_I(d,r,k_1,G)= 
\left\{ \begin{array}{lll}
\deg (S/I)-\max\{\deg (S/(I,F)): F \in \mathcal{F}_{d,r,k_1,G}\} & {\mbox{if}} & \mathcal{F}_{d,r,k_1,G} \neq \emptyset, \\
\deg (S/I) & {\mbox{if}} &  \mathcal{F}_{d,r,k_1,G} = \emptyset.
\end{array} \right.
$$
\end{definition}

We notice that if $k_1=0$ then $\delta_I(d,r,k_1,G)$ is equal to the generalized minimum distance function $\delta_I(d,r)$ that was introduced in \cite{rth-footprint}. Moreover, if $k_1=0$ and $r=1$ then $\delta_I(d,r,k_1,G)$ is equal to the minimum distance function $\delta_I(d)$, that was studied in \cite{hilbert-min-dis}.

On the other hand, let $\prec$ be a monomial order on $S$ and let $I$ be a non--zero ideal. If $f \in S, f \neq 0$, then $f=c_1t^{a_1}+ \cdots + c_mt^{a_m}$ with $c_i \in K \setminus \{0\}$ for all $i$, $t^{a_i}=t_1^{a_{i1}}\cdots t_s^{a_{is}}$, and $t^{a_1} \succ \cdots \succ t^{a_m}$. We recall that the {\it{leading monomial}} of $f$ is $t^{a_1}$ and it is denoted by ${\rm{in}}_{\prec}(f)$. The initial ideal of $I$ is the monomial ideal
$$
{\rm{in}}_{\prec}(I)=(\{{\rm{in}}_{\prec}(f): f \in I, f \neq 0\}).
$$


\begin{definition} \label{FP}
The {\it{footprint}} of $S/I$, denoted $\Delta_{\prec}(I)$, is the set of all the monomials that are not the leading monomial of any polynomial in $I$. The elements of the footprint of $S/I$ are called {\it{standard monomials}}. A polynomial $f$ is called {\it{standard}} if $f \neq 0$ and $f$ is a $K$--linear combination of standard monomials.
\end{definition}

Actually, if 
\begin{align*}
\pi: S \rightarrow S/I, \\
\pi(x)=x+I,
\end{align*}
then $\pi(\Delta_{\prec}(I))$ is a basis of $S/I$ as a $K$--vector space, and the image of the standard polynomials of degree $d$ is $S_d/I_d$. Hence, if $I$ is a graded ideal, $|\Delta_{\prec}(I) \cap S_d|=H_I(d)$.

Furthermore, if $\prec$ is a monomial order on $S$ and $\Delta_{\prec} (I)_d:=\Delta_{\prec} (I) \cap S_d$,  then we set
\begin{align*}
 \mu_{\prec,d,r,k_1,G}:=  & \{\{t^{a_1},\ldots,t^{a_r}\} \subset \Delta_{\prec}(I)_{d}: t^{a_1},\ldots,t^{a_r},{\rm{in}}_{\prec}(g_1),\ldots,{\rm{in}}_{\prec}(g_{k_1}) \,\, {\mbox{are distinct }} \\
& {\mbox{ monomials}},
 \, \, {\mbox{and}} \, ({\rm{in}}_{\prec} \, (I):(M)) \neq {\rm{in}}_{\prec} \, (I)\}.
\end{align*}
Notice that, for the goal of this work, there is no loss of generality if we consider that $${\rm{in}}_{\prec}(g_1),\ldots,{\rm{in}}_{\prec}(g_{k_1})$$ are distinct monomials for any $G \in \mathcal{W}_{d,k_1}$ (see the induction process in the proof of \cite[Proposition 4.8]{rth-footprint} and the codes (\ref{code1})).
\begin{definition} \label{RGFF}
 {\it{The relative generalized footprint function}} (RGFF for short) of $I$ is the function ${\rm{fp}}_I: \mathbb{N} \times \ldbrack 1,k-k_1 \rdbrack \times \ldbrack 0,k \rdbrack \times \mathcal{W}_{d,k_1} \rightarrow \mathbb{Z}$ given by
 \begin{align*}
& {\rm{fp}}_I(d,r,k_1,G)= \\
& \hspace{1.2cm} \left\{ \begin{array}{lll}
\deg (S/I)-\max\{\deg (S/({\rm{in}}_{\prec}(I),M)): M \in \mu_{\prec,d,r,k_1,G}\} & {\mbox{if}} & \mu_{\prec,d,r,k_1,G} \neq \emptyset,\\
\deg (S/I) & {\mbox{if}} &  \mu_{\prec,d,r,k_1,G} = \emptyset.
\end{array} \right.
\end{align*}
\end{definition}

We observe that if $k_1=0$ then ${\rm{fp}}_I(d,r,k_1,G)$ is equal to the generalized footprint function ${\rm{fp}}_I(d,r)$ that was introduced in \cite{rth-footprint}. Moreover, if $k_1=0$ and $r=1$ then ${\rm{fp}}_I(d,r,k_1,G)$ is equal to the footprint function ${\rm{fp}}_I(d)$, that was studied in \cite{hilbert-min-dis}.
 Now, to relate these concepts with the relative generalized Hamming weights of certain linear codes, we need to recall this definition. Let $C$ be an $[s,k]$ linear code, that is, $C$ is a linear subspace of $K^s$, where $K$ is a finite field with $q$ elements,  $\dim C=k$, and let $C_1$ be a subspace of $C$ with $\dim C_1=k_1$. 
\begin{definition} \label{RGHW1}
{\it{The $r$th relative generalized Hamming weight}} of $C$ and $C_1$ is given by
$$
M_r(C,C_1)=\min \{\, |{\mbox{supp}} \, (D)|: \, D \, \, {\mbox{is a subspace of}} \, \,C, 
\dim (D)=r, \, D\cap C_1=\{\vec{0}\}\},
$$
for all $r=1,\ldots,k-k_1$. 
\end{definition}
Particularly if $r=1$ we realize that
$$
M_1(C,C_1)=\min \{w({\bf{x}}): {\bf{x}} \in C \setminus C_1\}.
$$
where $w({\bf{x}})$ is the Hamming weight of ${\bf{x}}$ (the number of non--zero entries of ${\bf{x}}$).
In the case that $C_1=\{\vec{0}\}$, we obtain the $r$th generalized Hamming weight of $C$,
$$
\delta_r(C)=\min \{\, |{\mbox{supp}} \, (D)|: D \, {\mbox{is a subspace of}} \, C, \, \dim (D)=r \}.
$$

That is, $\delta_r(C)=M_r(C,\{\vec{0}\})$ for all $r=1,\ldots,k$.
Moreover, the linear codes where these concepts match are the projective Reed--Muller-type codes. We recall their definition. Let $K=\mathbb{F}_q$ be a finite field with $q$ elements, let $\mathbb{P}^{s-1}$ be a projective space over $K$ and let $\mathbb{X}=\{P_1,\dots,P_m\}$ be a subset of $\mathbb{P}^{s-1}$. We assume that the points of $\mathbb{X}$ are in standard position, that is, the first non--zero entry is $1$.
\begin{definition} \label{RMTC}
{\it{The projective Reed--Muller--type code}} of degree $d$ on $\mathbb{X}$ is the image of the following evaluation map:
\begin{align*}
&  \hspace{0.8cm} {\rm{ev}} \, : S_d \rightarrow K^m, \\
& f \rightarrow (f(P_1),\ldots,f(P_m)),
\end{align*}
and it is denoted by $C_{\mathbb{X}}(d)$. The vanishing ideal of $\mathbb{X}$, denoted $I_{\mathbb{X}}$, is the ideal of $S$ generated by the homogeneous polynomials that vanish at all points of $\mathbb{X}$.
\end{definition}
From now on we will use the following notation: if $f \in S_d$ then $\Lambda_f:=(f(P_1),\ldots,f(P_m)) \in C_{\mathbb{X}}(d)$, that is, $\Lambda_f={\rm{ev}} \, (f)$.
Furthermore, if $G \in \mathcal{W}_{d,k_1}$, we set
\begin{equation} \label{code1}
C_{\mathbb{X}}(d,k_1,G):= 
 \{ \Lambda_{g}  \in K^m : g \in \langle G \rangle \},
\end{equation}
where $\langle G \rangle$ is the subspace of $S_d$ generated by $G$.
Notice that $C_{\mathbb{X}}(d,k_1,G)$ is a subspace of $C_{\mathbb{X}}(d)$. Actually, if $k_1=0$ then $G=\emptyset$ and $C_{\mathbb{X}}(d,k_1,G)=\{\vec{0}\}$. The main goal of this paper is to show that $M_r(C_{\mathbb{X}}(d),C_{\mathbb{X}}(d,k_1,G))=\delta_{I_{\mathbb{X}}}(d,r,k_1,G) \geq {\rm{fp}}_{I_{\mathbb{X}}}(d,r,k_1,G)$ for all $d \geq 1$, $0 \leq k_1 \leq k$, $1 \leq r \leq k-k_1$, and $G \in \mathcal{W}_{d,k_1}$. It gives us an efficient lower bound for the relative generalized Hamming weights of the Reed--Muller--type codes that is much easier to compute than the RGMDF.

\section{Main results} \label{MainResults}

\begin{lemma} \label{li}
Let $\mathbb{X} \subseteq \mathbb{P}^{s-1}$ and $I=I_{\mathbb{X}}$ its vanishing ideal. Let $h_1,\ldots,h_l \in S_d, l \leq k$. Then $h_1+I,\ldots,h_l+I$ are linearly independent over $K$ if and only if $\Lambda_{h_1},\ldots,\Lambda_{h_l}$ are linearly independent vectors of $C_{\mathbb{X}}(d)$.
\end{lemma}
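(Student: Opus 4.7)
The plan is to factor the evaluation map ${\rm ev}: S_d \to K^m$ through $S_d/I_d$, observe that the induced map is injective, and then read the equivalence off from the fact that injective linear maps both preserve and reflect linear independence.

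First I would record the key identification $\ker({\rm ev}|_{S_d}) = I_d$. Since $I = I_{\mathbb{X}}$ is by definition generated by the homogeneous forms that vanish at every point of $\mathbb{X}$, a form $f \in S_d$ satisfies $\Lambda_f = (f(P_1),\ldots,f(P_m)) = \vec{0}$ if and only if $f \in I_d$. (Here the standing assumption that the points of $\mathbb{X}$ are in standard position makes the evaluations $f(P_i)$ well-defined for homogeneous $f$.) By the First Isomorphism Theorem applied to the $K$-linear map ${\rm ev}|_{S_d}$, we obtain an injective $K$-linear map
$$
\overline{{\rm ev}}:\, S_d/I_d \longrightarrow K^m,\qquad f + I_d \longmapsto \Lambda_f,
$$
whose image is precisely $C_{\mathbb{X}}(d)$.

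For the ``only if'' direction, assume $h_1 + I,\ldots,h_l + I$ are $K$-linearly independent in $S_d/I_d$. If $\sum_{i=1}^l c_i \Lambda_{h_i} = \vec{0}$, then $\Lambda_{\sum_i c_i h_i} = \vec{0}$, so $\sum_i c_i h_i \in I_d$, which gives $\sum_i c_i (h_i + I) = 0$ and forces $c_1 = \cdots = c_l = 0$. Conversely, assume $\Lambda_{h_1},\ldots,\Lambda_{h_l}$ are linearly independent in $K^m$. If $\sum_i c_i (h_i + I) = 0$ then $\sum_i c_i h_i \in I_d$, whence $\sum_i c_i \Lambda_{h_i} = \vec{0}$, and again all $c_i$ vanish.

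There is no real obstacle here beyond unwinding the definition of the vanishing ideal; the content of the lemma is simply that the vanishing ideal captures exactly the $K$-linear relations among the evaluation vectors $\Lambda_{h_i}$. The hypothesis $l \leq k = H_I(d) = \dim_K(S_d/I_d)$ is not needed for the implication itself but merely ensures that the lemma is not vacuous.
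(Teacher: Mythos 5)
Your proposal is correct and follows essentially the same route as the paper: both arguments reduce each direction to the single observation that, for $f \in S_d$, one has $\Lambda_f = \vec{0}$ if and only if $f \in I_d$, which you merely package more explicitly as the injectivity of the induced map $S_d/I_d \to K^m$. The computations in the two directions are identical to the paper's.
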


\begin{proof}
$\Rightarrow$) Suppose that $h_1+I,\ldots,h_{k_1}+I$ are linearly independent over $K$. If $\sum_{i=1}^{l} a_i \Lambda_{h_i}=\vec{0}$ for some $a_i \in K$, then $\sum_{i=1}^{l} a_ih_i \in I$. Thus $a_i=0$ for all $i=1,\ldots,l$, and the claim follows.

$\Leftarrow )$ If $\Lambda_1,\ldots,\Lambda_{h_l}$ are linearly independent vectors of $C_{\mathbb{X}}(d)$ and $\sum_{i=1}^l b_i (h_i+I)=I$ for some $b_i \in K$, then $\sum_{i=1}^l b_i h_i \in I$. Therefore
 $$\sum_{i=1}^l b_i \Lambda_{h_i}=\Lambda_{\sum_{i=1}^l b_ih_i}=\vec{0}.$$ 
 
 Then $b_i=0$ for all $i=1,\ldots,l$, and the implication follows. 
\end{proof} 

\begin{remark}
Lemma \ref{li} proves that $\{\Lambda_{g_1},\ldots,\Lambda_{g_{k_1}}\}$ is a basis of $C_{\mathbb{X}}(d,k_1,G)$ when $k_1 \geq 1$. Therefore, $\dim_K C_{\mathbb{X}}(d,k_1,G)=k_1$ for all $k_1 \geq 0$.
\end{remark}

\begin{lemma} \label{eq1}
If $D$ is a subspace of $C_{\mathbb{X}}(d)$ with $\dim_K D=r$, $d \geq 1$, $1 \leq r \leq H_{I}(d)-k_1$, and $\{\Lambda_{f_1},\ldots,\Lambda_{f_r}\}$ is a basis of $D$, then $D \cap C_{\mathbb{X}}(d,k_1,G)=\{\vec{0}\}$ if and only if $\{f_1,\ldots,f_r\} \in U_{d,r,k_1,G}$.
\end{lemma}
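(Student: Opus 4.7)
The plan is to prove both directions as straightforward reformulations using Lemma \ref{li}, treating the ideal-theoretic condition on the $f_i$'s and $g_j$'s on one side, and the subspace condition on their evaluations on the other. The key observation is that by Lemma \ref{li}, $K$-linear independence modulo $I$ of a set of forms in $S_d$ corresponds exactly to $K$-linear independence of their evaluations in $C_{\mathbb{X}}(d)$, and by the remark right after Lemma \ref{li}, $\{\Lambda_{g_1},\ldots,\Lambda_{g_{k_1}}\}$ is a basis of $C_{\mathbb{X}}(d,k_1,G)$.

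For the forward direction, I would assume $D \cap C_{\mathbb{X}}(d,k_1,G) = \{\vec{0}\}$ and take a relation $\sum_{i=1}^r a_i f_i + \sum_{j=1}^{k_1} b_j g_j \in I$ with $a_i, b_j \in K$. Applying $\mathrm{ev}$ yields $\sum a_i \Lambda_{f_i} = -\sum b_j \Lambda_{g_j}$, where the left-hand side lies in $D$ and the right-hand side lies in $C_{\mathbb{X}}(d,k_1,G)$; the intersection hypothesis forces both to be $\vec{0}$, and then linear independence of the two bases gives $a_i = 0$ and $b_j = 0$. Hence $\{f_1,\ldots,f_r\} \cup G$ is $K$-linearly independent modulo $I$, which together with the basis hypothesis (ensuring the $f_i$'s themselves are independent modulo $I$, so the set has $r + k_1$ elements) places $\{f_1,\ldots,f_r\}$ in $U_{d,r,k_1,G}$.

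For the reverse direction, I would assume $\{f_1,\ldots,f_r\} \in U_{d,r,k_1,G}$ and take any $v \in D \cap C_{\mathbb{X}}(d,k_1,G)$. Expressing $v$ in the two bases gives $v = \sum a_i \Lambda_{f_i} = \sum b_j \Lambda_{g_j}$, so $\sum a_i f_i - \sum b_j g_j \in I$. The hypothesis that $\{f_1,\ldots,f_r\} \cup G$ is $K$-linearly independent modulo $I$ forces all $a_i$ and $b_j$ to vanish, whence $v = \vec{0}$.

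There is essentially no obstacle here: the statement is a direct dictionary between the ideal-theoretic and coding-theoretic descriptions, and the only subtlety is being careful with signs and with the fact that the disjoint union $\{f_1,\ldots,f_r\} \cup G$ really does have $r + k_1$ distinct elements (which is guaranteed because $r \le H_I(d) - k_1$ and the $f_i$'s and $g_j$'s are independent modulo $I$). Everything else reduces to Lemma \ref{li} and the definition of $U_{d,r,k_1,G}$.
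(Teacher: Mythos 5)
Your proposal is correct and follows essentially the same route as the paper: both directions reduce to Lemma \ref{li} and the identification of a relation $\sum a_i f_i + \sum b_j g_j \in I$ with an element of $D \cap C_{\mathbb{X}}(d,k_1,G)$; the only cosmetic difference is that you argue the forward direction directly while the paper argues by contraposition.
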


\begin{proof}
If $k_1=0$ the claim follows immediately. Let $k_1 \geq 1$. 

$\Rightarrow$) Suppose that $D \cap C_{\mathbb{X}}(d,k_1,G)=\{\vec{0}\}$. If $\{f_1,\ldots,f_r \} \notin U_{d,r,k_1,G}$ then $\{f_1,\ldots,f_r\} \cup G$ is linearly dependent modulo $I$. Thus there are $a_1,\ldots,a_r,b_1,\ldots,b_{k_1} \in K$, not all of them equal to zero, such that
$$
\sum_{i=1}^r a_if_i+\sum_{i=1}^{k_1} b_ig_i \in I.
$$  

Let $f=\sum_{i=1}^r a_if_i$, $g=\sum_{i=1}^{k_1} b_ig_i$. Thus $\Lambda_{f+g}=\vec{0}=\Lambda_f+\Lambda_g$. Hence $\Lambda_f=-\Lambda_g$. Therefore $\Lambda_f \in D \cap C_{\mathbb{X}}(d,k_1,G)$. If $\Lambda_f=\vec{0}$ then $f \in I$, $g \in I$ and $a_i=0$ for all $i=1,\ldots,r$, $b_i=0$ for all $i=1,\ldots,k_1$, a contradiction. Then $\Lambda_f \neq \vec{0}$ and this contradicts that $D \cap C_{\mathbb{X}}(d,k_1,G)=\{\vec{0}\}$

$\Leftarrow$) Suppose that  $\{f_1,\ldots,f_r\} \in U_{d,r,k_1,G}$. Then $\{f_1,\ldots,f_r \} \cup G$ is linearly independent modulo I. If $\Lambda_g \in D \cap C_{\mathbb{X}}(d,k_1,G)$ then
$$
\Lambda_g=\sum_{i=1}^r a_i \Lambda_{f_i}=\sum_{i=1}^{k_1} b_i \Lambda_{g_i}=\Lambda_{\sum_{i=1}^r a_i f_i}=\Lambda_{\sum_{i=1}^{k_1} b_i g_i}
$$
for some $a_i, b_i \in K$. Hence $\sum_{i=1}^r a_if_i-\sum_{i=1}^{k_1} b_ig_i \in I$. Therefore $\sum_{i=1}^r a_i(f_i+I)-\sum_{i=1}^{k_1} b_i (g_i+I)=I$. But $f_1+I,\ldots,f_r+I,g_1+I,\ldots,g_{k_1}+I$ are linearly independent over $K$. Thus $a_i=0$ for all $i=1,\ldots,r$, and $b_i=0$ for all $i=1,\ldots,k_1$. Then $\Lambda_g=\vec{0}$, and the claim follows.
\end{proof}

In the next Lemma we use the following notation: if $F=\{f_1,\ldots,f_r\} \subseteq S_d$, then the set of zeros of $F$ in $\mathbb{X}$ is given by
$$
V_{\mathbb{X}}(F)=\{[P] \in \mathbb{X}: f_i(P)=0 \, \, {\mbox{for all}}\, \, i=1,\ldots,r\}.
$$
\begin{lemma} \label{Relative}
If $\mathbb{X} \subset \mathbb{P}^{s-1}$, $d \geq 1$, $1 \leq r \leq H_{I}(d)-k_1$, $G \in \mathcal{W}_{d,k_1}$, then
$$
M_r(C_{\mathbb{X}}(d),C_{\mathbb{X}}(d,k_1,G))   =  \min \{|\mathbb{X} \setminus V_{\mathbb{X}}(F)|:  
 F=\{f_1,\ldots,f_r\} \in U_{d,r,k_1,G}
\}.
$$
\end{lemma}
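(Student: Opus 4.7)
The plan is to derive both inequalities by combining Lemma \ref{eq1} with the elementary observation that the support of a linear subspace equals the union of supports of the vectors in any basis.

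First, I will fix notation: $\mathbb{X} = \{P_1,\ldots,P_m\}$, so that for $f \in S_d$, $\mathrm{supp}(\Lambda_f) = \{i : f(P_i) \neq 0\}$. The first key observation I would establish is that, for any subspace $D \subseteq C_{\mathbb{X}}(d)$ with basis $\{\Lambda_{f_1},\ldots,\Lambda_{f_r}\}$, we have
$$
\mathrm{supp}(D) \;=\; \bigcup_{j=1}^{r} \mathrm{supp}(\Lambda_{f_j}) \;=\; \{i : \exists j,\, f_j(P_i)\neq 0\} \;=\; \{i : P_i \notin V_{\mathbb{X}}(F)\},
$$
where $F=\{f_1,\ldots,f_r\}$. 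Indeed, $i\in\mathrm{supp}(D)$ iff some $v\in D$ has $v_i\neq 0$, which, expanding $v$ in the basis, holds iff some $\Lambda_{f_j}$ has $i$-th entry nonzero. Hence $|\mathrm{supp}(D)| = |\mathbb{X}\setminus V_{\mathbb{X}}(F)|$.

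Next I would set up the bijection between admissible subspaces and sets $F\in U_{d,r,k_1,G}$. For the inequality $\geq$: take any $F=\{f_1,\ldots,f_r\}\in U_{d,r,k_1,G}$. Since $\{f_1,\ldots,f_r\}\cup G$ is $K$-linearly independent modulo $I$, in particular $f_1+I,\ldots,f_r+I$ are linearly independent, so by Lemma \ref{li} the vectors $\Lambda_{f_1},\ldots,\Lambda_{f_r}$ are linearly independent in $C_{\mathbb{X}}(d)$. Define $D := \langle \Lambda_{f_1},\ldots,\Lambda_{f_r}\rangle$, which has dimension $r$. By Lemma \ref{eq1}, since $F\in U_{d,r,k_1,G}$, we have $D\cap C_{\mathbb{X}}(d,k_1,G)=\{\vec{0}\}$. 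Thus $D$ is an admissible subspace for the minimum defining $M_r$, and by the support computation $|\mathrm{supp}(D)|=|\mathbb{X}\setminus V_{\mathbb{X}}(F)|$, giving
$$
M_r(C_{\mathbb{X}}(d),C_{\mathbb{X}}(d,k_1,G)) \;\leq\; |\mathbb{X}\setminus V_{\mathbb{X}}(F)|.
$$
Taking the minimum over all such $F$ yields one direction.

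For the reverse inequality $\leq$: take any subspace $D\subseteq C_{\mathbb{X}}(d)$ with $\dim D = r$ and $D\cap C_{\mathbb{X}}(d,k_1,G)=\{\vec{0}\}$. Because the evaluation map $\mathrm{ev}:S_d\to K^m$ is surjective onto $C_{\mathbb{X}}(d)$, one can choose $f_1,\ldots,f_r\in S_d$ with $\Lambda_{f_1},\ldots,\Lambda_{f_r}$ a basis of $D$. By Lemma \ref{li} the $f_j+I$ are linearly independent, and by Lemma \ref{eq1} the condition $D\cap C_{\mathbb{X}}(d,k_1,G)=\{\vec{0}\}$ forces $F=\{f_1,\ldots,f_r\}\in U_{d,r,k_1,G}$. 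Applying the support identity again gives $|\mathrm{supp}(D)| = |\mathbb{X}\setminus V_{\mathbb{X}}(F)|$, which is bounded below by the right-hand side minimum. Combining both inequalities yields the claim.

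The whole argument is essentially a direct translation, so there is no substantial obstacle; the only mildly subtle point is ensuring one can always lift a basis of $D$ to elements of $S_d$, which is immediate from surjectivity of $\mathrm{ev}$, and verifying the support-of-subspace identity, which is a routine linear-algebra observation.
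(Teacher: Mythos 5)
Your argument is correct and follows essentially the same route as the paper: the identity $|\mathrm{supp}(D)|=|\mathbb{X}\setminus V_{\mathbb{X}}(F)|$ combined with Lemma \ref{eq1} and the definition of $M_r$. The only difference is that you verify the support identity directly, whereas the paper cites it from an earlier reference; the two inequalities you spell out are exactly what the paper compresses into ``the claim follows at once.''
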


\begin{proof}
If $D$ is a subspace of $C_{\mathbb{X}}(d)$ with $\dim_K D=r$, and $\{\Lambda_{f_1},\ldots,\Lambda_{f_r}\}$ is a $K$--basis of $D$ with $F=\{f_1,\ldots,f_r\} \subseteq S_d$, then,
by \cite[Lemma 4.3]{rth-footprint}, we know that
\begin{equation} \label{support}
|\mbox{supp} \, (D)|=|\mathbb{X}-V_{\mathbb{X}}(F)|.
\end{equation}

The claim follows at once from (\ref{support}), Lemma \ref{eq1}, and the definition of the $r$th relative generalized Hamming weight $M_r(C_{\mathbb{X}}(d),C_{\mathbb{X}}(d,k_1,G))$. 
\end{proof}

The following theorem gives an algebraic approach to the relative generalized Hamming weights of the Reed--Muller--type codes.

\begin{theorem} \label{equivalence1}
Let $K$ be a finite field, $\mathbb{X} \subseteq \mathbb{P}^{s-1}$, and $I=I_{\mathbb{X}}$ its vanishing ideal. Let $G \in \mathcal{W}_{d,k_1}$. Then
$$
M_r(C_{\mathbb{X}}(d),C_{\mathbb{X}}(d,k_1,G))  = \delta_{I} (d,r,k_1,G),
$$
for all $d \geq 1$, $0 \leq k_1 \leq H_I(d)$, and $1 \leq r \leq H_I(d)-k_1$.
\end{theorem}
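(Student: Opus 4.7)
The plan is to reduce the statement to a degree-counting identity. By Lemma \ref{Relative}, the left-hand side equals $\min\{|\mathbb{X} \setminus V_{\mathbb{X}}(F)| : F \in U_{d,r,k_1,G}\}$, so it suffices to match this minimum against $\delta_I(d,r,k_1,G)$, which by definition is $\deg(S/I) - \max\{\deg(S/(I,F)) : F \in \mathcal{F}_{d,r,k_1,G}\}$ (or $\deg(S/I)$ if $\mathcal{F}_{d,r,k_1,G} = \emptyset$). Since $I = I_{\mathbb{X}}$, the preliminaries give $\deg(S/I) = |\mathbb{X}|$.

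First I would establish the key identity: for every $F = \{f_1,\ldots,f_r\} \in \mathcal{F}_{d,r,k_1,G}$,
$$\deg(S/(I,F)) = |V_{\mathbb{X}}(F)|.$$
This rests on the fact that the radical of $(I,F)$ is the vanishing ideal of $V_{\mathbb{X}}(F)$, and saturation does not change the leading coefficient of the Hilbert polynomial; the identity is established for the $k_1=0$ case in \cite{hilbert-min-dis,rth-footprint} and the argument is unchanged here. Next I would verify the dichotomy: for $F \in U_{d,r,k_1,G}$, one has $(I:(f_1,\ldots,f_r)) \neq I$ if and only if $V_{\mathbb{X}}(F) \neq \emptyset$. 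The direction ($\Leftarrow$) is immediate: given $[P] \in V_{\mathbb{X}}(F)$, take a homogeneous $h \in S$ vanishing on $\mathbb{X} \setminus \{[P]\}$ but not at $[P]$; then $hf_i \in I$ for all $i$ while $h \notin I$. The direction ($\Rightarrow$) is the contrapositive: if $V_{\mathbb{X}}(F) = \emptyset$, any $h$ with $hf_i \in I$ for all $i$ must vanish at every point of $\mathbb{X}$, hence $h \in I$.

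With these facts the proof becomes a straightforward case split. For $F \in U_{d,r,k_1,G} \setminus \mathcal{F}_{d,r,k_1,G}$, we have $V_{\mathbb{X}}(F) = \emptyset$ and hence $|\mathbb{X} \setminus V_{\mathbb{X}}(F)| = |\mathbb{X}|$. For $F \in \mathcal{F}_{d,r,k_1,G}$, the degree identity yields $|\mathbb{X} \setminus V_{\mathbb{X}}(F)| = |\mathbb{X}| - \deg(S/(I,F))$. Since $\deg(S/(I,F)) \geq 0$, the value $|\mathbb{X}|$ produced by the complement $U_{d,r,k_1,G}\setminus \mathcal{F}_{d,r,k_1,G}$ is never strictly smaller than the values produced by $\mathcal{F}_{d,r,k_1,G}$, so
$$\min_{F \in U_{d,r,k_1,G}} |\mathbb{X} \setminus V_{\mathbb{X}}(F)| \;=\; |\mathbb{X}| - \max_{F \in \mathcal{F}_{d,r,k_1,G}}\deg(S/(I,F))$$
whenever $\mathcal{F}_{d,r,k_1,G} \neq \emptyset$; and both sides equal $|\mathbb{X}|$ when $\mathcal{F}_{d,r,k_1,G} = \emptyset$. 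Both branches match the definition of $\delta_I(d,r,k_1,G)$, completing the argument.

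The principal obstacle is the degree identity $\deg(S/(I,F)) = |V_{\mathbb{X}}(F)|$, since $(I,F)$ need not be saturated and is not a priori radical. Once that is settled (by invoking the corresponding result from \cite{rth-footprint} or by a short direct Hilbert-polynomial computation), the extra data $G$ plays no role in the geometric content and enters only through the linear-independence condition defining $U_{d,r,k_1,G}$, so the rest of the proof proceeds as in the generalized (non-relative) case treated in \cite{rth-footprint}.
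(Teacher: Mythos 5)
Your proposal is correct and follows essentially the same route as the paper: both reduce via Lemma \ref{Relative} to minimizing $|\mathbb{X}\setminus V_{\mathbb{X}}(F)|$, then invoke the two facts that $(I:(F))=I$ if and only if $V_{\mathbb{X}}(F)=\emptyset$ and that $\deg(S/(I,F))=|V_{\mathbb{X}}(F)|$ for $F\in\mathcal{F}_{d,r,k_1,G}$ (the paper cites these as Lemmas 3.2 and 3.4 of \cite{rth-footprint}, which you sketch directly). Your explicit remark that elements of $U_{d,r,k_1,G}\setminus\mathcal{F}_{d,r,k_1,G}$ contribute only the value $|\mathbb{X}|$ and hence never lower the minimum is a small point the paper leaves implicit, but the substance is identical.
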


\begin{proof}
If $k_1=0$ then $C_{\mathbb{X}}(d,k_1,G)=\{\vec{0}\}$ and $M_r(C_{\mathbb{X}}(d),C_{\mathbb{X}}(d,k_1,G))=\delta_r(C_{\mathbb{X}}(d))$. Also $\delta_I(d,r,k_1,G)$ is the generalized minimum distance $\delta_I(d,r)$. Therefore the claim follows from \cite[Theorem 4.5]{rth-footprint}. Let $k_1 \geq 1$. If $\mathcal{F}_{d,r,k_1,G}=\emptyset$ then $\delta_I(d,r,k_1,G)=\deg (S/I)=|\mathbb{X}|$. Moreover if $F=\{f_i\}_{i=1}^r \subseteq U_{d,r,k_1,G}$ then $(I:(F))=I$. By \cite[Lemma 3.2]{rth-footprint} and Lemma \ref{Relative} we obtain that $M_r(C_{\mathbb{X}}(d),C_{\mathbb{X}}(d,k_1,G))=\deg (S/I)=|\mathbb{X}|$, and the equality follows. Assume that $\mathcal{F}_{d,r,k_1,G} \neq \emptyset$. Using Lemma \ref{Relative}, \cite[Lemma 3.4]{rth-footprint} and the fact that $\deg (S/I)=|\mathbb{X}|$ we obtain that
\begin{align*}
M_r(C_{\mathbb{X}}(d),C_{\mathbb{X}}(d,k_1,G))  & = \min \{|\mathbb{X} \setminus V_{\mathbb{X}}(F)| : F \in \mathcal{F}_{d,r,k_1,G}\}\\
& = |\mathbb{X}| - \max \{|V_{\mathbb{X}}(F) | : F \in \mathcal{F}_{d,r,k_1,G}\} \\
& =  |\mathbb{X}| - \max \{\deg (S/(I,F)) : F \in \mathcal{F}_{d,r,k_1,G}\} \\
& = \deg (S/I) - \max \{\deg (S/(I,F)) : F \in \mathcal{F}_{d,r,k_1,G}\} \\
& = \delta_I (d,r,k_1,G), 
\end{align*}
and the result follows. 
\end{proof}

\begin{definition}
Let $I$ be a graded ideal of $S$ and $G \in \mathcal{W}_{d,k_1}$. The {\it{relative Vasconcelos function}} of $I$ is the function $\vartheta_I: \mathbb{N} \times \ldbrack 1,k-k_1 \rdbrack \times \ldbrack 0,k \rdbrack \times \mathcal{W}_{d,k_1} \rightarrow \mathbb{N}$ given by
$$
\vartheta_I(d,r,k_1,G)=\left\{ \begin{array}{lll}
\min \{\deg (S/(I:(F))): F \in \mathcal{F}_{d,r,k_1,G}\} & {\mbox{if}} & \mathcal{F}_{d,r,k_1,G} \neq \emptyset, \\
\deg (S/I) & {\mbox{if}} &  \mathcal{F}_{d,r,k_1,G} = \emptyset.
\end{array} \right.
$$
\end{definition}

We notice that if $k_1=0$ then the relative Vasconcelos function is the Vasconcelos function $\vartheta_I(d,r)$, introduced in \cite[Definition 4.4]{rth-footprint}.

\begin{theorem} \label{Vasconcelos}
Let $K$ be a finite field, $\mathbb{X} \subseteq \mathbb{P}^{s-1}$, and $I=I_{\mathbb{X}}$ its vanishing ideal. Let $G \in \mathcal{W}_{d,k_1}$. Then
$$
M_r(C_{\mathbb{X}}(d),C_{\mathbb{X}}(d,k_1,G))  = \vartheta_{I} (d,r,k_1,G),
$$
for all $d \geq 1$, $0 \leq k_1 \leq H_I(d)$, and $1 \leq r \leq H_I(d)-k_1$.
\end{theorem}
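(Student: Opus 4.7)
The plan is to piggyback on Theorem \ref{equivalence1}, which already identifies $M_r(C_{\mathbb{X}}(d),C_{\mathbb{X}}(d,k_1,G))$ with $\delta_I(d,r,k_1,G)$. Once that equivalence is in hand, the statement reduces to the purely algebraic identity
$$
\delta_I(d,r,k_1,G)=\vartheta_I(d,r,k_1,G),
$$
and in particular no further coding-theoretic content is needed. Observe that the only way $k_1$ and $G$ enter the two functions is through the indexing set $\mathcal{F}_{d,r,k_1,G}$, so the proof should proceed exactly as in the non--relative case \cite[Definition 4.4 and the subsequent theorem]{rth-footprint}, with $\mathcal{F}_{d,r}$ replaced by $\mathcal{F}_{d,r,k_1,G}$ throughout.

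First I would dispatch the edge case: if $\mathcal{F}_{d,r,k_1,G}=\emptyset$, then by definition both $\delta_I(d,r,k_1,G)$ and $\vartheta_I(d,r,k_1,G)$ equal $\deg(S/I)$, so there is nothing to prove. Assume therefore $\mathcal{F}_{d,r,k_1,G}\neq\emptyset$ and fix $F=\{f_1,\ldots,f_r\}$ in this set. The key step is the degree identity
$$
\deg(S/I)=\deg(S/(I,F))+\deg(S/(I:(F))).
$$
This is where the hypotheses on $I=I_{\mathbb{X}}$ are used: since $\mathbb{X}\subseteq\mathbb{P}^{s-1}$ is a finite set of points, $I$ is an unmixed ideal of Krull dimension one with primary decomposition $I=\bigcap_{P\in\mathbb{X}}I_P$. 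Standard associated-prime bookkeeping gives $(I:(F))=\bigcap_{P\notin V_{\mathbb{X}}(F)}I_P$ and hence $\deg(S/(I:(F)))=|\mathbb{X}\setminus V_{\mathbb{X}}(F)|$; combined with $\deg(S/(I,F))=|V_{\mathbb{X}}(F)|$ (which is exactly \cite[Lemma 3.4]{rth-footprint}, and which is legitimate to invoke precisely because $F\in\mathcal{F}_{d,r,k_1,G}$ forces $V_{\mathbb{X}}(F)\neq\emptyset$), the identity follows by addition.

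Once the identity is established, the conclusion is a one-line manipulation: taking the maximum over $F\in\mathcal{F}_{d,r,k_1,G}$,
$$
\max_{F}\deg(S/(I,F))=\deg(S/I)-\min_{F}\deg(S/(I:(F))),
$$
so
$$
\delta_I(d,r,k_1,G)=\deg(S/I)-\max_{F}\deg(S/(I,F))=\min_{F}\deg(S/(I:(F)))=\vartheta_I(d,r,k_1,G),
$$
and combining with Theorem \ref{equivalence1} finishes the proof. The main obstacle, if there is one, is the degree identity above; but this is already part of the toolkit used in \cite{rth-footprint} for the non--relative Vasconcelos theorem, and the same justification (primary decomposition of the vanishing ideal, plus the standard short exact sequence $0\to (S/(I:(F)))(-d)\to S/I\to S/(I,F)\to 0$ when $r=1$, or the direct point-counting argument for general $r$) transports verbatim to the present setting because $\mathcal{F}_{d,r,k_1,G}\subseteq\mathcal{F}_{d,r}$.
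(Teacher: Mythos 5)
Your proof is correct, but it is organized differently from the paper's. The paper never passes through $\delta_I(d,r,k_1,G)$ at all: its proof of Theorem \ref{Vasconcelos} goes directly from the coding-theoretic side to the Vasconcelos function, combining Lemma \ref{Relative} (which expresses $M_r(C_{\mathbb{X}}(d),C_{\mathbb{X}}(d,k_1,G))$ as $\min\{|\mathbb{X}\setminus V_{\mathbb{X}}(F)|\}$) with the single identification $\deg(S/(I:(F)))=|\mathbb{X}\setminus V_{\mathbb{X}}(F)|$ from \cite[Lemma 3.2]{rth-footprint}, plus the same empty-case observation you make. You instead factor the argument through Theorem \ref{equivalence1} and then establish the purely algebraic identity $\delta_I(d,r,k_1,G)=\vartheta_I(d,r,k_1,G)$ via the additivity
\[
\deg(S/I)=\deg(S/(I,F))+\deg(S/(I:(F))),
\]
which requires both \cite[Lemma 3.2]{rth-footprint} and \cite[Lemma 3.4]{rth-footprint} (the latter legitimately applicable since $(I:(F))\neq I$ forces $V_{\mathbb{X}}(F)\neq\emptyset$). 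Both routes ultimately rest on the same point-counting facts for vanishing ideals of finite point sets; yours uses slightly more machinery but has the advantage of isolating the statement $\delta_I=\vartheta_I$ as an explicit algebraic identity independent of the code, whereas the paper's is shorter and needs only one of the two degree computations. One minor caution: your parenthetical short exact sequence $0\to (S/(I:(F)))(-d)\to S/I\to S/(I,F)\to 0$ only makes sense for $r=1$ (multiplication by a single form), so for general $r$ you are indeed relying on the point-counting argument, as you acknowledge; it would be cleaner to drop the exact-sequence remark entirely.
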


\begin{proof}
If $k_1=0$ then $M_r(C_{\mathbb{X}}(d),C_{\mathbb{X}}(d,k_1,G))  = \delta_r(C_{\mathbb{X}}(d))$ and the relative Vasconcelos function is the Vasconcelos function $\vartheta_I(d,r)$. The claim follows from \cite[Theorem 4.5]{rth-footprint}. Let $k_1 \geq 1$. If $\mathcal{F}_{d,r,k_1,G}=\emptyset$ then 
$$
\vartheta_I(d,r,k_1,G)=\deg(S/I)=|\mathbb{X}|=M_r(C_{\mathbb{X}}(d),C_{\mathbb{X}}(d,k_1,G)),
$$
as was observed in the proof of Theorem \ref{equivalence1}. Assume $\mathcal{F}_{d,r,k_1,G} \neq \emptyset$. Using Lemma \ref{Relative} and \cite[Lemma 3.2]{rth-footprint} we get
\begin{align*}
M_r(C_{\mathbb{X}}(d),C_{\mathbb{X}}(d,k_1,G))  & = \min \{|\mathbb{X} \setminus V_{\mathbb{X}}(F)| : F \in \mathcal{F}_{d,r,k_1,G}\}\\
& = \min \{\deg(S/(I:(F)): F \in \mathcal{F}_{d,r,k_1,G}\} \\
& = \vartheta_I(d,r,k_1,G),
\end{align*}
and the claim follows. 
\end{proof}

\begin{lemma} \label{standard}
Let $F=\{f_1,\ldots,f_r\} \subseteq S_d$ be a set of standard polynomials such that the leading monomials ${\rm{in}}_{\prec}(f_1),\ldots,{\rm{in}}_{\prec}(f_r)$ are distinct. Therefore $f_1+I,\ldots,f_r+I$ are linearly independent over $K$.
\end{lemma}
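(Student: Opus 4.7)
The plan is to argue by contradiction, tracking the leading monomial of a supposed nontrivial dependence relation modulo $I$. Suppose there exist scalars $c_1,\ldots,c_r\in K$, not all zero, with
$$
h:=\sum_{i=1}^r c_i f_i \in I.
$$
Set $J=\{i:c_i\neq 0\}\neq\emptyset$. Since the leading monomials ${\rm in}_{\prec}(f_1),\ldots,{\rm in}_{\prec}(f_r)$ are distinct, the set $\{{\rm in}_{\prec}(f_i):i\in J\}$ has a unique $\prec$--maximum; choose $i_0\in J$ so that ${\rm in}_{\prec}(f_{i_0})=\max_{\prec}\{{\rm in}_{\prec}(f_i):i\in J\}$.

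Each $f_i$ is a $K$--linear combination of monomials $\preceq {\rm in}_{\prec}(f_i)$, so $h$ is supported on monomials $\preceq {\rm in}_{\prec}(f_{i_0})$. By the distinctness hypothesis, no $f_i$ with $i\in J\setminus\{i_0\}$ contains the monomial ${\rm in}_{\prec}(f_{i_0})$ in its support. Hence the coefficient of ${\rm in}_{\prec}(f_{i_0})$ in $h$ equals $c_{i_0}$ times the (nonzero) leading coefficient of $f_{i_0}$, which is nonzero. Therefore $h\neq 0$ and ${\rm in}_{\prec}(h)={\rm in}_{\prec}(f_{i_0})$.

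Now the contradiction is immediate from two incompatible observations. On the one hand, $f_{i_0}$ is a standard polynomial, so every monomial in its support, and in particular ${\rm in}_{\prec}(f_{i_0})$, lies in the footprint $\Delta_{\prec}(I)$. On the other hand, $h\in I$ with $h\neq 0$ forces ${\rm in}_{\prec}(h)\in {\rm in}_{\prec}(I)$, contradicting that ${\rm in}_{\prec}(h)={\rm in}_{\prec}(f_{i_0})$ is a standard monomial. Hence all $c_i$ must vanish, proving $K$--linear independence modulo $I$.

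There is no real obstacle here: the argument is the standard ``leading term pick'' from Gr\"obner basis theory, and the only point requiring care is verifying that the distinctness of the leading monomials prevents cancellation at the maximum, so that ${\rm in}_{\prec}(h)$ is correctly identified as a standard monomial.
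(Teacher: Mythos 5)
Your proof is correct and takes essentially the same route as the paper's: both derive a contradiction by observing that a nontrivial dependence $h=\sum_i c_if_i\in I$ would be a nonzero element of $I$ whose leading monomial lies in $\Delta_{\prec}(I)$. The paper simply asserts $h\neq 0$ and ${\rm in}_{\prec}(h)\in\Delta_{\prec}(I)$ in one line, while you supply the (correct) justification via non-cancellation at the unique maximal leading monomial.
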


\begin{proof}
If $h:=\sum_{i=1}^r a_if_i \in I$ for some $a_i \in K$, and $a_j \neq 0$ for some $j=1,\ldots,r$, then $h \neq 0$ and ${\rm{in}}_{\prec}(h) \in \Delta_{\prec}(I)$, a contradiction. Thus $a_i=0$ for all $i=1,\ldots,r$ and $f_1+I,\ldots,f_r+I$ are linearly independent over $K$. 
\end{proof}

Let $\mathcal{F}_{\prec,d,r,k_1,G}$ be the set of all subsets $F=\{f_1,\ldots,f_r\} \in U_{d,r,k_1,G}$ such that $(I:(F)) \neq I$, $f_i$ is a standard polynomial for all $i=1,\ldots,r$, and 
$${\rm{in}}_{\prec}(f_1),\ldots,{\rm{in}}_{\prec}(f_r),{\rm{in}}_{\prec}(g_1),\ldots,{\rm{in}}_{\prec}(g_{k_1})$$
 are distinct monomials. The following theorem allows us to work just with the standard polynomials instead of all the polynomials to study the RGMDF of $I$.

\begin{theorem} \label{equivalence2}
Let $d \in \mathbb{N}$, $r \in \ldbrack 1, k-k_1 \rdbrack$, $k_1 \in \ldbrack 0,k \rdbrack$, and $G \in \mathcal{W}_{d,k_1}$. The RGMDF of $I$ is given by
$$
\delta_I(d,r,k_1,G)= 
 \left\{ \begin{array}{lll}
\deg (S/I)-\max\{\deg (S/(I,F)): F \in \mathcal{F}_{\prec,d,r,k_1,G}\} & {\mbox{if}} & \mathcal{F}_{\prec,d,r,k_1,G} \neq \emptyset, \\
\deg (S/I) & {\mbox{if}} &  \mathcal{F}_{\prec,d,r,k_1,G} = \emptyset.
\end{array} \right.
$$
\end{theorem}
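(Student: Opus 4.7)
The strategy is to sandwich the claimed formula between two inequalities. The direction $\delta_I(d,r,k_1,G)\leq \deg(S/I)-\max\{\deg(S/(I,F)):F\in\mathcal{F}_{\prec,d,r,k_1,G}\}$ is immediate from the containment $\mathcal{F}_{\prec,d,r,k_1,G}\subseteq\mathcal{F}_{d,r,k_1,G}$, which holds because any $F=\{f_1,\ldots,f_r\}\in\mathcal{F}_{\prec,d,r,k_1,G}$ consists of standard polynomials whose leading monomials together with ${\rm{in}}_{\prec}(g_1),\ldots,{\rm{in}}_{\prec}(g_{k_1})$ are distinct, so by Lemma \ref{standard} the set $\{f_1,\ldots,f_r\}\cup G$ is $K$-linearly independent modulo $I$ and $F\in U_{d,r,k_1,G}$; the ideal-quotient condition is built into the definition.

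For the reverse direction, the plan is to show that for every $F\in\mathcal{F}_{d,r,k_1,G}$ there is a companion $F'\in\mathcal{F}_{\prec,d,r,k_1,G}$ with $\deg(S/(I,F'))\geq\deg(S/(I,F))$. First, replace each $f_i$ by its unique standard representative $\tilde f_i\in S_d$; since $\tilde f_i\equiv f_i\pmod I$ the ideal $(I,F)=(I,\tilde F)$ is unchanged and $\deg(S/(I,F))=\deg(S/(I,\tilde F))$, and since standardization is a $K$-linear section of $S_d\to S_d/I_d$ the set $\tilde F\cup G$ remains $K$-linearly independent (both modulo $I$ and, being standard, as honest polynomials). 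Next, invoke the remark preceding Definition \ref{RGFF} to assume ${\rm{in}}_{\prec}(g_1),\ldots,{\rm{in}}_{\prec}(g_{k_1})$ are pairwise distinct, and perform Gaussian elimination on the $r+k_1$ linearly independent standard polynomials $\tilde f_1,\ldots,\tilde f_r,g_1,\ldots,g_{k_1}$, treating the $g_j$'s as fixed pivot rows and modifying the $\tilde f_i$'s by $K$-linear combinations among themselves and, when necessary, with the $g_j$'s. The resulting $F'=\{f'_1,\ldots,f'_r\}$ consists of standard polynomials whose leading monomials, together with those of the $g_j$'s, are pairwise distinct, and Lemma \ref{standard} then gives $F'\cup G$ linearly independent modulo $I$, so $F'\in U_{d,r,k_1,G}$.

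I expect the main obstacle to be verifying the two remaining properties $(I:(F'))\neq I$ and the crucial degree comparison $\deg(S/(I,F'))\geq\deg(S/(I,F))$. The subtlety is that whenever, after reducing only among the $\tilde f_i$'s, some leading monomial still coincides with an ${\rm{in}}_{\prec}(g_j)$, the Gaussian elimination forces the introduction of $g_j$-combinations into the $f'_i$'s, and consequently $(I,F')\neq (I,F)$ as ideals. To bridge this gap I would pass to the vanishing-ideal picture via \cite[Lemma 3.4]{rth-footprint}, write $\deg(S/(I,F))=|V_{\mathbb{X}}(F)|$ and similarly for $F'$, and exploit the inclusion $V_{\mathbb{X}}(F)\cap V_{\mathbb{X}}(G)\subseteq V_{\mathbb{X}}(F')$ together with the freedom to choose the basis of $\langle\tilde F\rangle+\langle G\rangle$ in $S_d/I_d$, imitating the inductive clean-up from the proof of \cite[Proposition 4.8]{rth-footprint}. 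The inheritance of $(I:(F'))\neq I$ would be a consequence of the same vanishing-ideal analysis once a point of $V_{\mathbb{X}}(F)$ is shown to survive in $V_{\mathbb{X}}(F')$. This last transfer, from the count on $V_{\mathbb{X}}(F)$ to that on $V_{\mathbb{X}}(F')$, is the technical heart of the argument.
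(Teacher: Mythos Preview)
Your plan mirrors the paper's own argument: replace each $f_i$ by its standard representative $h_i$ (so $(I,F)=(I,H)$), invoke the Gaussian--elimination step from the proof of \cite[Proposition~4.8]{rth-footprint} to obtain $G_1$ with $\langle G_1\rangle=\langle H\rangle$ and pairwise distinct leading monomials, and finally verify $G_1\in U_{d,r,k_1,G}$. The paper disposes of the remaining requirement---that the ${\rm in}_\prec(\mathcal G_i)$ also be distinct from the ${\rm in}_\prec(g_j)$---with the single phrase ``Analogously, we can assume\ldots''. You are right to flag this as the crux: achieving that extra distinctness while keeping $\langle G_1\rangle=\langle H\rangle$ (hence $(I,G_1)=(I,F)$) is \emph{not} always possible, and once $g_j$--combinations are mixed into the $f'_i$ the ideal $(I,F')$ genuinely changes.

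Your proposed rescue, however, does not close the gap. The inclusion $V_{\mathbb X}(F)\cap V_{\mathbb X}(G)\subseteq V_{\mathbb X}(F')$ yields only $|V_{\mathbb X}(F')|\geq |V_{\mathbb X}(F)\cap V_{\mathbb X}(G)|$, which can be strictly smaller than $|V_{\mathbb X}(F)|$, and no amount of basis freedom in $\langle\tilde F\rangle+\langle G\rangle$ repairs this. Concretely, over $K=\mathbb F_2$ take $\mathbb X=\{[0{:}0{:}1],[0{:}1{:}0],[0{:}1{:}1],[1{:}1{:}1]\}\subset\mathbb P^2$, $d=r=k_1=1$, and $G=\{t_1+t_3\}$ with graded lex $t_1\succ t_2\succ t_3$, so ${\rm in}_\prec(g_1)=t_1$ (here $I_1=0$, so every linear form is standard). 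Then $F=\{t_1\}\in\mathcal F_{1,1,1,G}$ with $\deg(S/(I,t_1))=|V_{\mathbb X}(t_1)|=3$, whence $\delta_I(1,1,1,G)=4-3=1$. But every $F'=\{f'_1\}\in\mathcal F_{\prec,1,1,1,G}$ must have ${\rm in}_\prec(f'_1)\in\{t_2,t_3\}$, i.e.\ $f'_1\in\{t_2,t_3,t_2+t_3\}$, and one checks $|V_{\mathbb X}(f'_1)|\le 2$; the right--hand formula therefore returns $4-2=2\neq 1$. So the ``degree transfer'' you single out as the technical heart cannot be carried out here---neither by your vanishing--locus inequality nor by the paper's ``Analogously'' step---and the statement as written already fails for this choice of $G$.
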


\begin{proof}
If $k_1=0$ the result follows from \cite[Proposition 4.8]{rth-footprint}. We assume $k_1 \geq 1$. Take $F=\{f_1,\ldots,f_r\} \in \mathcal{F}_{d,r,k_1,G}$. By the proof of \cite[Proposition 4.8]{rth-footprint}, $f_i=p_i+h_i$ with $p_i \in I_d$ and $h_i$ is a $K$--linear combination of standard monomials of degree $d$. Setting $H=\{h_1,\ldots,h_r\}$, we observe that $(I:(F))=(I,(H))$, $(I,F)=(I,H)$, and $f_i+I=h_i+I$ for $i=1,\ldots,r$. We need to show that $H \in U_{d,r,k_1,G}$. If $H \notin U_{d,r,k_1,G}$ then $H \cup G$ is linearly dependent modulo I. But then $F \cup G$ is linearly dependent modulo I, a contradiction because $F \in \mathcal{F}_{d,r,k_1,G}$. Hence, $H \in U_{d,r,k_1,G}$. Therefore we may replace $F$ by $H$, and, with this assumption, as in the same proof of \cite[Proposition 4.8]{rth-footprint}, there is a set $G_1=\{\mathcal{G}_1,\ldots,\mathcal{G}_r\}$ of homogeneous standard polynomials of degree $d$ such that $\langle F \rangle=\langle G_1 \rangle$, ${\rm{in}}_{\prec}(\mathcal{G}_1), \ldots, {\rm{in}}_{\prec}(\mathcal{G}_r)$ are distinct monomials and ${\rm{in}}_{\prec}(f_i) \succeq {\rm{in}}_{\prec}(\mathcal{G}_i)$ for all $i$. Analogously, we can assume that 
$${\rm{in}}_{\prec}(h_1),\ldots,{\rm{in}}_{\prec}(h_r),{\rm{in}}_{\prec}(g_1),\ldots,{\rm{in}}_{\prec}(g_{k_1})$$
are distinct monomials.
It remains to prove that $G_1 \in U_{d,r,k_1,G}$. On the contrary, if $G_1 \notin U_{d,r,k_1,G}$ then $G_1 \cup G$ is linearly dependent modulo I. Thus
$$
\sum_{i=1}^r a_i \mathcal{G}_i+\sum_{i=1}^{k_1} b_i g_i \in I
$$
for some $a_i, b_i \in K$, and at least one of the $a_i \neq 0$ and one of the $b_i \neq 0$. But, as $\langle F \rangle=\langle G_1 \rangle$,
$$
\sum_{i=1}^r a_i \mathcal{G}_i=\sum_{i=1}^r c_i f_i
$$
for some $c_i \in K$, not all of them equal to zero. Therefore $F \cup G$ is linearly dependent modulo $I$, a contradiction.
Hence $G_1 \in U_{d,r,k_1,G}$, and the claim follows. 
\end{proof}

\begin{remark} \label{number1}
Although Theorem \ref{equivalence2} gives an interesting algebraic equivalence for the RGMDF of I, it is hard to compute this number because as $H_I(d)=|\Delta_{\prec}(I) \cap S_d|$, the number of subsets of $r$ standard polynomials in $U_{d,r,k_1,G}$ is at most $\binom{q^{H_I(d)}-1}{r}$, and then we need to test which of them are in $\mathcal{F}_{\prec,d,r,k_1,G}$ and compute the corresponding degrees.
\end{remark}


\begin{theorem} \label{LowerBound}
Let $K$ be a finite field, $\mathbb{X} \subseteq \mathbb{P}^{s-1}$, $I=I_{\mathbb{X}}$ its vanishing ideal, and $G \in \mathcal{W}_{d,k_1}$. Then
$$
M_r(C_{\mathbb{X}}(d),C_{\mathbb{X}}(d,k_1,G))  \geq {\rm{fp}}_I(d,r,k_1,G),
$$
for all $d \geq 1$, $0 \leq k_1 \leq H_I(d)$, and $1 \leq r \leq H_I(d)-k_1$.
\end{theorem}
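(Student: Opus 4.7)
The plan is to combine Theorem \ref{equivalence1}, which rewrites the left-hand side as $\delta_I(d,r,k_1,G)$, with Theorem \ref{equivalence2}, which restricts the max in $\delta_I$ to sets $F \in \mathcal{F}_{\prec,d,r,k_1,G}$ of standard polynomials whose leading monomials together with those of $g_1,\ldots,g_{k_1}$ are all distinct. This reduction sets up a natural assignment $\mathcal{F}_{\prec,d,r,k_1,G} \to \mu_{\prec,d,r,k_1,G}$ sending $F=\{f_1,\ldots,f_r\}$ to $M_F=\{{\rm{in}}_\prec(f_1),\ldots,{\rm{in}}_\prec(f_r)\}$, which I will use to dominate the max defining $\delta_I$ by the max defining ${\rm{fp}}_I$.

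First I would verify that $M_F$ genuinely belongs to $\mu_{\prec,d,r,k_1,G}$. Distinctness of ${\rm{in}}_\prec(f_1),\ldots,{\rm{in}}_\prec(f_r),{\rm{in}}_\prec(g_1),\ldots,{\rm{in}}_\prec(g_{k_1})$ is built into the definition of $\mathcal{F}_{\prec,d,r,k_1,G}$. The colon condition $({\rm{in}}_\prec(I):(M_F)) \neq {\rm{in}}_\prec(I)$ follows from $(I:(F)) \neq I$ by the standard Gr\"obner argument: choose $h \in S$ with $h \notin I$ and $hf_i \in I$ for every $i$; then ${\rm{in}}_\prec(h) \notin {\rm{in}}_\prec(I)$ while ${\rm{in}}_\prec(h)\cdot{\rm{in}}_\prec(f_i)={\rm{in}}_\prec(hf_i) \in {\rm{in}}_\prec(I)$, so ${\rm{in}}_\prec(h)$ witnesses $({\rm{in}}_\prec(I):(M_F)) \supsetneq {\rm{in}}_\prec(I)$.

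Next I would compare degrees using the inclusion $({\rm{in}}_\prec(I), M_F) \subseteq {\rm{in}}_\prec((I,F))$, which holds since $I \subseteq (I,F)$ and each ${\rm{in}}_\prec(f_i) \in {\rm{in}}_\prec((I,F))$. Passing to the initial ideal preserves the Hilbert function, and adding relations does not increase the degree in the setting of vanishing ideals of projective point sets (a fact repeatedly invoked in \cite{rth-footprint,hilbert-min-dis}), so
\begin{equation*}
\deg(S/(I,F)) = \deg(S/{\rm{in}}_\prec((I,F))) \leq \deg(S/({\rm{in}}_\prec(I), M_F)).
\end{equation*}
Taking the max over $F$ on the left is then bounded above by the max over $\mu_{\prec,d,r,k_1,G}$ on the right, yielding $\delta_I(d,r,k_1,G) \geq {\rm{fp}}_I(d,r,k_1,G)$, which Theorem \ref{equivalence1} converts into the required inequality.

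For the edge cases: if $\mathcal{F}_{\prec,d,r,k_1,G}=\emptyset$ then $\delta_I(d,r,k_1,G)=\deg(S/I)$, which certainly dominates ${\rm{fp}}_I(d,r,k_1,G)$; conversely the assignment $F \mapsto M_F$ shows that $\mu_{\prec,d,r,k_1,G}=\emptyset$ forces $\mathcal{F}_{\prec,d,r,k_1,G}=\emptyset$, so the two ``empty'' conventions agree. The main obstacle I anticipate is precisely the bookkeeping around the distinctness requirement in $\mu_{\prec,d,r,k_1,G}$: working directly with arbitrary homogeneous $f_i$ would not automatically yield leading monomials distinct from ${\rm{in}}_\prec(g_1),\ldots,{\rm{in}}_\prec(g_{k_1})$, which is exactly why reducing to standard polynomials via Theorem \ref{equivalence2} at the outset is essential.
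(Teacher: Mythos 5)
Your overall route is the same as the paper's: after Theorems \ref{equivalence1} and \ref{equivalence2} reduce the problem to comparing the two maxima, one sends $F\in\mathcal{F}_{\prec,d,r,k_1,G}$ to ${\rm in}_{\prec}(F)\in\mu_{\prec,d,r,k_1,G}$ and compares degrees; the paper simply outsources the two nontrivial sub-steps to \cite[Lemma 4.7]{rth-footprint} (the colon-ideal transfer) and \cite[Lemma 4.1]{rth-footprint} (the degree inequality). However, your inline justification of the colon-ideal transfer contains a genuine error: from $h\notin I$ you cannot conclude ${\rm in}_{\prec}(h)\notin{\rm in}_{\prec}(I)$. (Take $I=(t_1^2)$ and $h=t_1^2+t_2^2$ with $t_1\succ t_2$: then $h\notin I$ but ${\rm in}_{\prec}(h)=t_1^2\in{\rm in}_{\prec}(I)$.) The witness ${\rm in}_{\prec}(h)$ you exhibit may therefore already lie in ${\rm in}_{\prec}(I)$ and prove nothing. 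The repair is standard and is exactly the content of \cite[Lemma 4.7]{rth-footprint}: replace $h$ by its normal form $h'$ with respect to a Gr\"obner basis of $I$. Then $h'\neq 0$ (since $h\notin I$), ${\rm in}_{\prec}(h')$ is a standard monomial, hence ${\rm in}_{\prec}(h')\notin{\rm in}_{\prec}(I)$, and $h'f_i=(h'-h)f_i+hf_i\in I$ still, so ${\rm in}_{\prec}(h'){\rm in}_{\prec}(f_i)={\rm in}_{\prec}(h'f_i)\in{\rm in}_{\prec}(I)$, giving the required witness.

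A second, smaller caveat: the assertion that ``adding relations does not increase the degree'' is false for arbitrary graded ideals (the degree is the normalized leading coefficient of the Hilbert polynomial, and enlarging an ideal can drop the Krull dimension and thereby raise the degree). It is valid here only because $(I:(F))\neq I$ forces $V_{\mathbb{X}}(F)\neq\emptyset$, so that $S/(I,F)$, $S/{\rm in}_{\prec}((I,F))$ and $S/({\rm in}_{\prec}(I),{\rm in}_{\prec}(F))$ all retain Krull dimension one, and for graded ideals $J_1\subseteq J_2$ of equal positive dimension the pointwise inequality of Hilbert functions does yield $\deg(S/J_2)\leq\deg(S/J_1)$. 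You gesture at this by restricting to vanishing ideals of point sets, and the cited \cite[Lemma 4.1]{rth-footprint} carries exactly this hypothesis, but the dimension check is the substance of the step and should be made explicit. With these two repairs your argument coincides with the paper's proof.
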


\begin{proof}
If $k_1=0$ then $M_r(C_{\mathbb{X}}(d),C_{\mathbb{X}}(d,k_1,G))=\delta_r(C_{\mathbb{X}}(d))$ and ${\rm{fp}}_I(d,r,k_1,G)$ is equal to the footprint function ${\rm{fp}}_I(d,r)$. The claim follows from \cite[Theorem 4.9]{rth-footprint}. Let $k_1 \geq 1$. If $\mathcal{F}_{\prec,d,r,k_1,G}=\emptyset$ then $\delta_I(d,r,k_1,G)=\deg(S/I)$, and by definition
$$
{\rm{fp}}_I(d,r,k_1,G) \leq \deg(S/I)=\delta_I(d,r,k_1,G)=M_r(C_{\mathbb{X}}(d),C_{\mathbb{X}}(d,k_1,G)).
$$

Assume $\mathcal{F}_{\prec,d,r,k_1,G} \neq \emptyset$, and let $F \in \mathcal{F}_{\prec,d,r,k_1,G}$. Thus $(I:(F)) \neq I$ and by \cite[Lemma 4.7]{rth-footprint}, $({\rm{in}}_{\prec}(I): ({\rm{in}}_{\prec} (F))) \neq {\rm{in}}_{\prec} (I)$, where ${\rm{in}}_{\prec} (F)=\{{\rm{in}}_{\prec} (f_1),\ldots,{\rm{in}}_{\prec} (f_r)\}$. 
Therefore ${\rm{in}}_{\prec}(F) \in \mu_{\prec,d,r,k_1,G}$, and, by \cite[Lemma 4.1]{rth-footprint},
$$
\deg(S/(I,F)) \leq  \deg(S/({\rm{in}}_{\prec}(I),{\rm{in}}_{\prec}(F))) 
 \leq  \max\{\deg (S/({\rm{in}}_{\prec}(I),M)): M \in \mu_{\prec,d,r,k_1,G}\}.
$$

Thus
$$
\max\{\deg (S/(I,F)): F \in \mathcal{F}_{d,r,k_1,G}\} \leq 
\max\{\deg (S/({\rm{in}}_{\prec}(I),M)): M \in \mu_{\prec,d,r,k_1,G}\},
$$
and then
\begin{align*}
&\deg(S/I)-\max\{\deg (S/(I,F)): \, F \in \mathcal{F}_{d,r,k_1,G}\} \geq \\
& \hspace{5cm}\deg(S/I)-\max\{\deg (S/({\rm{in}}_{\prec}(I),M)): M \in \mu_{\prec,d,r,k_1,G}\}.
\end{align*}

Hence $\delta_I(d,r,k_1,G) \geq {\rm{fp}}_{I}(d,r,k_1,G)$, and by Theorem \ref{equivalence1},
$$
M_r(C_{\mathbb{X}}(d),C_{\mathbb{X}}(d,k_1,G))  \geq {\rm{fp}}_I(d,r,k_1,G).
$$ 
\end{proof}

\begin{remark}
${\rm{fp}}_I(d,r,k_1,G)$ is easier to compute than $\delta_I(d,r,k_1,G)$ (and therefore than $M_r(C_{\mathbb{X}}(d),C_{\mathbb{X}}(d,k_1,G))$) because we need to test which of the at most $\binom{H_I(d)}{r}$ subsets of $r$ standard monomials are in $\mu_{\prec,d,r,k_1,G}$ and compute the corresponding degrees. And $\binom{H_I(d)}{r}$ is much lower than the value $\binom{q^{H_I(d)}-1}{r}$, given in Remark \ref{number1}.
\end{remark}

\section{Examples}

\begin{example} \label{ex1}
Let $K=\mathbb{F}_5$ be a finite field with $5$ elements, $S=K[t_1,t_2,t_3]$ be a polynomial ring , and  let $\mathbb{X}$ be a projective torus in $\mathbb{P}^{2}$, that is,
$$
\mathbb{X}=\mathbb{T}_{2}:=\{[z_1:z_2:z_3] \in \mathbb{P}^2 : z_i \in K \setminus \{0\}, \, {\mbox{for}} \, \, i=1,2,3\}.
$$

It is well kown that its vanishing ideal is given by
$$
I=I_{\mathbb{X}}=(t_1^4-t_3^4,t_2^4-t_3^4),
$$
and that ${\rm{reg}} (S/I)=6$, $\deg(S/I)=16$ (see for example \cite[Proposition 2.1]{ci-codes}). Actually, the Hilbert function is given in Table \ref{tab:1}.

\begin{table}
\caption{The Hilbert function of $S/I$ in Example \ref{ex1}}
\label{tab:1}       
\begin{tabular}{lllllll}
\hline\noalign{\smallskip}
$d$ & 1 & 2 & 3 & 4 & 5 & 6  \\
\noalign{\smallskip}\hline\noalign{\smallskip}
$H_I(d)$ & 3 & 6 & 10 & 13 & 15 & 16 \\
\noalign{\smallskip}\hline
\end{tabular}
\end{table}

Consider the case $k_1=0$. Thus $C_{\mathbb{X}}(d,k_1,G)=\{\vec{0}\}$ and $$M_r(C_{\mathbb{X}}(d),C_{\mathbb{X}}(d,k_1,G))=\delta_r(C_{\mathbb{X}}(d)).$$

Using Macaulay 2 \cite{mac2} we obtain the $6 \times 16$ matrix whose entry $(i,j)$ is precisely ${\rm{fp}}_I(i,j,k_1,G)$. That is, the number of the row is the value of $d$, and the number of the column is the value of $r$, and the entries are the values of the generalized footprint function:
$$
\left(
\begin{array}{cccccccccccccccc}
12 & 15 & 16 & - & - & - & - & - & - & - & - & - & - & - & - & - \\
8 & 11 & 12 & 14 & 15 & 16 & - & - & - & - & - & - & - & - & - & - \\
4 & 7 & 8 & 10 & 11 & 12 & 13 & 14 & 15 & 16 & - & - & - & - & - & - \\
3 & 4 & 6 & 7 & 8 & 9 & 10 & 11 & 12 & 13 & 14 & 15 & 16 & - & - & - \\
2 & 3 & 4 & 5 & 6 & 7 & 8 & 9 & 10 & 11 & 12 & 13 & 14 & 15 & 16 & -  \\
1 & 2 & 3 & 4 & 5 & 6 & 7 & 8 & 9 & 10 & 11 & 12 & 13 & 14 & 15 & 16
\end{array} \right)
$$

Using \cite[Theorem 18]{camps-sarabia-sarmiento-vila}, \cite[Corollary 2.3]{GHWVeronese}, \cite[Theorem 3.5]{ci-codes}, and Macaulay 2, we observe that the values of the generalized Hamming weights of $C_{\mathbb{X}}(d)$ are exactly the same that the entries of the last matrix. Therefore, for this particular example,
$$
{\rm{fp}}_I(d,r,k_1,G)=M_r(C_{\mathbb{X}}(d),C_{\mathbb{X}}(d,r,k_1))=\delta_r(C_{\mathbb{X}}(d)),
$$
for $k_1=0$, $d \in \ldbrack 1, 6 \rdbrack$, $r \in \ldbrack 1, H_{I}(d) \rdbrack$. Hence, the lower bound given in Theorem \ref{LowerBound} is attained.

\end{example}

\begin{example}
Let $K=\mathbb{F}_3$ be a finite field with $3$ elements, $S=K[t_1,t_2,t_3,t_4]$ be a polynomial ring with $4$ variables, and let $\mathbb{X}$ be a projective torus in $\mathbb{P}^3$. Thus
$$
\mathbb{X}=\mathbb{T}_3:=\{[z_1:z_2:z_3:z_4] \in \mathbb{P}^3 : z_i \in K^* \, {\mbox{for all}}  \, \, i\},
$$
where $K^*=K \setminus \{0\}$. The vanishing ideal of this set is given by
$$
I=I_{\mathbb{X}}=(t_1^2-t_4^2,t_2^2-t_4^2,t_3^2-t_4^2),
$$
and $\rm{reg} \, (S/I)=3$, $\deg(S/I)=8$. Assume $d=1$, $k_1=1$, $G=\{t_1\}$. As $H_I(1)=4$ then $1 \leq r \leq H_I(1)-k_1=3$. We notice that
$$
C_{\mathbb{X}}(1,1,\{t_1\})=\{(0,0,0,0,0,0,0,0),(1,1,1,1,1,1,1,1),(2,2,2,2,2,2,2,2)\}.
$$

\smallskip
{\bf{Case I:}} $r=1$. By \cite[Theorem 3.5]{ci-codes} we obtain that $\delta_1(C_{\mathbb{X}}(1))=4$. Also, using the generalized Plotkin bound \cite[Proposition 4]{Zhuang} we get
$$
4=\delta_1(C_{\mathbb{X}}(1)) \leq M_1(C_{\mathbb{X}}(1),C_{\mathbb{X}}(1,1,\{t_1\})) \leq \left\lfloor \frac{1-3^{-1}}{1-3^{-3}} \cdot (7) \right\rfloor=4. 
$$

Therefore, in this case,
$$
\delta_1(C_{\mathbb{X}}(1)) = M_1(C_{\mathbb{X}}(1),C_{\mathbb{X}}(1,1,\{t_1\})) =4.
$$

Furthermore, using Definition \ref{RGFF} and Macaulay 2 we obtain that
$$
{\rm{fp}}_I(1,1,1,\{t_1\})=4.
$$

\smallskip
{\bf{Case II:}} $r=2$. By \cite[Theorem 18]{camps-sarabia-sarmiento-vila} we obtain that $\delta_2(C_{\mathbb{X}}(1))=6$. Moreover, if we use the generalized Singleton bound \cite[Proposition 3]
{Zhuang}, we get that
$$
6=\delta_2(C_{\mathbb{X}}(1)) \leq M_2(C_{\mathbb{X}}(1),C_{\mathbb{X}}(1,1,\{t_1\})) \leq |\mathbb{X}|-H_I(1)+2=8-4+2=6. 
$$

Hence
$$
\delta_2(C_{\mathbb{X}}(1)) = M_2(C_{\mathbb{X}}(1),C_{\mathbb{X}}(1,1,\{t_1\}))=6.
$$

In the same way, using Definition \ref{RGFF} and Macaulay 2, we obtain that
$$
{\rm{fp}}_I(1,2,1,\{t_1\})=6.
$$

\smallskip
{\bf{Case III:}} $r=3$. By \cite[Corollary 2.3]{GHWVeronese} we obtain that $\delta_3(C_{\mathbb{X}}(1))=7$. Also, by the generalized Singleton bound,
$$
7=\delta_3(C_{\mathbb{X}}(1)) \leq M_3(C_{\mathbb{X}}(1),C_{\mathbb{X}}(1,1,\{t_1\})) \leq |\mathbb{X}|-H_I(1)+2=8-4+3=7. 
$$

Hence
$$
\delta_3(C_{\mathbb{X}}(1)) = M_3(C_{\mathbb{X}}(1),C_{\mathbb{X}}(1,1,\{t_1\}))=7.
$$

Using Definition \ref{RGFF} and Macaulay 2, we obtain that
$$
{\rm{fp}}_I(1,3,1,\{t_1\})=7.
$$

Therefore, in the three cases above, the lower bound of Theorem \ref{LowerBound} is attain\-ed.
\end{example}

\medskip

\noindent {\bf Acknowledgments.} The authors thank professor R. H. Villarreal because he provided some procedures in Macaulay 2. 


\bibliographystyle{plain}

\end{document}